% !TeX encoding = windows-1252
\documentclass[11pt,letterpaper,reqno]{amsart}

\usepackage{tikz}
\usetikzlibrary{positioning, shapes.geometric, arrows.meta}
\usepackage{amssymb}
\usepackage{amsmath}
\usepackage{amsthm}
\usepackage{amsfonts}
\usepackage{bbm}
\usepackage{enumitem} 
\usepackage{pgfplots}
\pgfplotsset{compat=1.18} 
\usepackage{booktabs}

\usepackage{graphicx}
\usepackage[T1]{fontenc}
\usepackage{doi}
\usepackage{float} 
\addtolength{\hoffset}{-1.5cm}\addtolength{\textwidth}{3cm}
\addtolength{\voffset}{-1cm}\addtolength{\textheight}{2cm}

\usepackage{bookmark}
\usepackage{hyperref}
\hypersetup{pdfstartview={FitH}}

\newtheorem{thm}{Theorem}[section]
\newtheorem{lem}[thm]{Lemma}

\newtheorem{conj}[thm]{Conjecture}
\theoremstyle{definition}

\numberwithin{equation}{section}
\newtheorem*{theorem}{Audenaert-Kittaneh's Conjecture}
% --- from your original file ---
\usepackage{mathrsfs}

\newcommand{\tr}{\text{tr }}

\begin{document}
	
	\title[Proof of Audenaert-Kittaneh's Conjecture]{Proof of Audenaert-Kittaneh's Conjecture}
	
	\author[T.~Zhang]{Teng Zhang}
	
	\address{School of Mathematics and Statistics, Xi'an Jiaotong University, Xi'an 710049, P. R. China}
	\email{teng.zhang@stu.xjtu.edu.cn}
	
	\subjclass[2020]{}
	% You may fill MSC codes, e.g.:
	% 26D10, 30A10, 15A60, 47A57.
	
	\keywords{Schatten $p$-norm, duality argument, Three-lines theorem, trace function}
	
	\begin{abstract}
		By using the Three-lines theorem for a certain analytic function defined in terms of the trace and a duality argument method, we prove Audenaert-Kittaneh's conjecture related to $p$-Schatten classes. This generalizes the main result obtained by McCarthy in \cite{Mc67}.
	\end{abstract}
	
	\maketitle
	%\tableofcontents
	
	\section{Introduction}
	
	The classical Clarkson's inequalities \cite{C36} for the Lebesgue spaces $L_p$, and their non-commutative analogues for the Schatten trace ideals, play an important role in analysis, operator theory, and mathematical physics. They have been generalised in various directions. Among these versions for more-general symmetric norms \cite{BH88}, for operators by $n$-tuples \cite{BK04,HK08} and for the Haagerup $L_p$ spaces \cite{FK86}, as well as refinements \cite{BCL94}. 
	More uniform convexity results appear in Pisier-Xu 's survey \cite[Chapter 1]{PX03}.
	
	Let $\mathbb{B}(\mathscr{H})$ denote the algebra of all bounded linear operators acting on a complex separable Hilbert space $\mathscr{H}$. If $X\in\mathbb{B}(\mathscr{H})$ is compact, we denote by $\{s_j(X)\}$ the sequence of decreasingly ordered singular values of $X$. For $p>0$, let
	\begin{eqnarray*}
		\|X\|_p=\big( \sum_{j}s_j^p(X)\big) ^{\frac{1}{p}}=\left(\tr \left|X\right|^p \right)^{\frac{1}{p}} ,
	\end{eqnarray*}
	where $\tr(\cdot)$  is the usual trace functional. This defines a norm (quasi-norm, resp.) for $1\le p<\infty$ ($0<p<1$, resp.) on the set 
	\[
	\mathbb{B}_p(\mathscr{H})=\{X\in \mathbb{B}(\mathscr{H}):\|X\|_p<\infty\},
	\]
	which is called the $p$-Schatten class of $\mathbb{B}(\mathscr{H})$; cf. \cite{GK69}.
	
	Clarkson's inequalities for operators $A$ and $B$ in $\mathbb{B}_p(\mathscr{H})$ (see \cite{Mc67}) assert that 
	\begin{thm}[McCarthy]
		Let $A,B\in \mathbb{B}_p(\mathscr{H})$. Then for $0<p\le 2$,
		\begin{eqnarray}\label{c1}
			2^{p-1}(\|A\|_p^p+\|B\|_p^p)\le \|A+B\|_p^p+\|A-B\|_p^p\le 	2(\|A\|_p^p+\|B\|_p^p),
		\end{eqnarray}
		and for $p\ge 2$,
		\begin{eqnarray}\label{c2}
			2(\|A\|_p^p+\|B\|_p^p)\le \|A+B\|_p^p+\|A-B\|_p^p\le 2^{p-1}(\|A\|_p^p+\|B\|_p^p).
		\end{eqnarray}
	\end{thm}
	
	For $p=2$ both inequalities \eqref{c1} and \eqref{c2} reduce to the parallelogram law
	\[
	\|A-B\|_2^2+\|A+B\|_2^2=2(\|A\|_2^2+\|B\|_2^2).
	\]
	
	One natural generalization of partial (1) and (2) is as follows:
	\begin{thm}[Hirazallah \& Kittaneh \cite{HK08}]
		Let $A_1,\ldots,A_n\in \mathbb{B}_p(\mathscr{H})$. Then for $0<p\le 2$,
		\begin{eqnarray}\label{c1n}
			n^{p-1}\sum_{i=1}^n\|A_i\|^p_p	\le \|\sum_{i=1}^nA_i\|_p^p+\sum_{1\le i<j\le n}\|A_i-A_j\|_p^p,
		\end{eqnarray}
		and for $p\ge 2$,
		\begin{eqnarray}\label{c2n}
			\|\sum_{i=1}^nA_i\|_p^p+\sum_{1\le i<j\le n}\|A_i-A_j\|_p^p\le n^{p-1}\sum_{i=1}^n\|A_i\|^p_p.
		\end{eqnarray}
	\end{thm}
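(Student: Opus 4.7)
I would view both (\ref{c1n}) and (\ref{c2n}) as norm inequalities for a single linear map. With $m=1+\binom{n}{2}$, define
\[
M:\mathbb{B}_p(\mathscr{H})^n\longrightarrow \mathbb{B}_p(\mathscr{H})^m,\quad (A_1,\ldots,A_n)\longmapsto \Bigl(\sum_{i=1}^n A_i,\; (A_i-A_j)_{1\le i<j\le n}\Bigr),
\]
equipped on each side with the $\ell_p$-direct-sum norm $\|\vec X\|_p^p=\sum_k\|X_k\|_p^p$, equivalently realized via the corresponding block-diagonal Schatten operators. In this language (\ref{c2n}) becomes the operator-norm bound $\|M\|_{p\to p}\le n^{1-1/p}$, while (\ref{c1n}) is the reverse lower bound $\|M\vec A\|_p\ge n^{1-1/p}\|\vec A\|_p$.

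\textbf{Endpoints and interpolation.} Expanding $\tr(A_i^*A_j)$ yields the identity $\|M\vec A\|_2^2=n\|\vec A\|_2^2$, and polarization gives $M^*M=nI$; the triangle inequality yields $\|M\vec A\|_\infty\le n\|\vec A\|_\infty$. The non-commutative Three-lines theorem (Riesz-Thorin applied to an analytic family of Schatten-valued operators, cf.\ Pisier-Xu \cite[Ch.~1]{PX03}) interpolates between $\|M\|_{2\to 2}\le n^{1/2}$ and $\|M\|_{\infty\to\infty}\le n$ to produce $\|M\|_{p\to p}\le n^{1-1/p}$ for $2\le p\le \infty$; this is exactly (\ref{c2n}).

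\textbf{Duality and the quasi-Banach range.} For $1\le p\le 2$ with conjugate exponent $q\ge 2$, the relation $\vec A=\tfrac{1}{n}M^*(M\vec A)$ together with the trace-pairing duality $\|M^*\|_{p\to p}=\|M\|_{q\to q}$, and the previous step applied at exponent $q$, give
\[
\|\vec A\|_p\le \tfrac{1}{n}\|M^*\|_{p\to p}\,\|M\vec A\|_p\le n^{1/p-1}\,\|M\vec A\|_p,
\]
which is (\ref{c1n}) in this range. For $0<p\le 1$ duality fails, but the Schatten quasi-norm satisfies $\|X+Y\|_p^p\le \|X\|_p^p+\|Y\|_p^p$; applying this to the $n$-term identity $nA_i=\sum_j A_j+\sum_{j\ne i}(A_i-A_j)$, summing over $i$, and discarding a resulting factor of $2/n\le 1$ recovers (\ref{c1n}).

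\textbf{Main obstacle.} The endpoint computations, the duality manoeuvre, and the quasi-norm estimate are essentially elementary; the technical heart is the complex-interpolation step, which requires constructing an analytic family $z\mapsto T_z$ of block-diagonal Schatten operators whose trace-pairings at $\Re z=0$ and $\Re z=1$ reproduce the $p=2$ and $p=\infty$ endpoint bounds. This is the same trace-function / Three-lines device the paper advertises for attacking the Audenaert-Kittaneh conjecture itself, and it is the only place where genuine non-commutative analysis enters.
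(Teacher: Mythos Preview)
The paper does not prove this theorem: it is quoted as a known result of Hirzallah and Kittaneh \cite{HK08}, included only to motivate the Audenaert--Kittaneh conjecture. There is therefore no proof in the paper to compare yours against.

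Your argument is correct. The identity $\|M\vec A\|_2^2=n\|\vec A\|_2^2$ is an elementary Hilbert--Schmidt expansion (it is exactly the $p=2$ equality case of Theorem~\ref{CM}); together with the trivial $\ell_\infty$ bound it interpolates to (\ref{c2n}) for $p\ge2$. The relation $\vec A=\tfrac1n M^*(M\vec A)$ combined with $\|M^*\|_{p\to p}=\|M\|_{q\to q}\le n^{1-1/q}$ then yields (\ref{c1n}) on $[1,2]$, and for $0<p\le1$ the subadditivity $\|X+Y\|_p^p\le\|X\|_p^p+\|Y\|_p^p$ applied to $nA_i=\sum_jA_j+\sum_{j\ne i}(A_i-A_j)$ and summed over $i$ gives $n^{p}\sum_i\|A_i\|_p^p\le n\|\sum_j A_j\|_p^p+2\sum_{i<j}\|A_i-A_j\|_p^p$, which is stronger than (\ref{c1n}) once $n\ge2$ (the case $n=1$ is trivial).

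One remark on your ``main obstacle'': since $M$ is induced by a fixed scalar $m\times n$ matrix acting componentwise, the interpolation step needs nothing beyond the standard fact that $(\ell_p^k(S_p))_p$ is a complex interpolation scale---equivalently, embed each tuple as a block-diagonal operator in a single Schatten class and interpolate there. No analytic trace-function family is actually required here; in the paper that device (Lemma~\ref{leBK}) is reserved for the genuinely harder inequalities (\ref{AK1})--(\ref{AK2}), where the target map is not linear in the relevant sense and a bare Riesz--Thorin argument does not suffice.
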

	
	In 1994, Ball, Carlen and Lieb \cite{BCL94} explained the following Optimal 2-uniform convexity inequality.
	\begin{thm}[Ball, Carlen \& Lieb]
		Let $A,B\in \mathbb{B}_p(\mathscr{H})$. Then for $1\le p\le 2$,
		\begin{eqnarray}\label{ebcl}
			\left(\dfrac{\|A+B\|_p^p+\|A-B\|_p^p}{2}\right)^\frac{2}{p}\ge \|A\|_p^2+(p-1)\|B\|_p^2.
		\end{eqnarray}
		For $2\le p\le \infty$, the inequality is reversed.
	\end{thm}
	
	We note that the validity of inequality \eqref{ebcl} implies the  2-uniform convexity of both $L_p$ and $\mathbb{B}_p(\mathscr{H})$ for $1<p\le 2$. In contrast, Clarkson's inequality only implies that these spaces are $q$-uniformly convex over the same range $1<p\le 2$, see the following result.
	\begin{thm}[McCarthy]\label{Mc}
		Let $A,B\in \mathbb{B}_p(\mathscr{H})$. Then for $1<p\le 2$,
		\begin{eqnarray}\label{c3}
			\|A+B\|_p^q+\|A-B\|_p^q\le2(\|A\|_p^p+\|B\|_p^p)^\frac{q}{p},
		\end{eqnarray}
		and for $p\ge 2$,
		\begin{eqnarray}\label{c4}
			2(\|A\|_p^p+\|B\|_p^p)^\frac{q}{p}\le \|A+B\|_p^q+\|A-B\|_p^q
		\end{eqnarray}
		where $p,q>0$ and $\frac{1}{p}+\frac{1}{q}=1$.
	\end{thm}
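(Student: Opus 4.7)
The plan is to prove both inequalities in Theorem~\ref{Mc} by Stein-style complex interpolation: interpolating the trivial endpoint estimates at $p=2$ (the parallelogram law) and $p\in\{1,\infty\}$ (triangle inequality in operator norm) via Hadamard's three-lines theorem, after first casting the inequality in an explicitly bilinear form through the Schatten--von Neumann duality $(\mathbb{B}_p)^\ast=\mathbb{B}_q$. I will detail the argument for (\ref{c3}) under $1<p\le 2$; (\ref{c4}) for $p\ge 2$ is handled by the parallel construction between $p=2$ and $p=\infty$.

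The first step is a reduction by duality. By density I may assume $A,B$ are finite-rank and, by homogeneity, normalize $\|A\|_p^p+\|B\|_p^p=1$, so the target becomes $\|A+B\|_p^q+\|A-B\|_p^q\le 2$. The two-term $\ell^p$-$\ell^q$ H\"older duality combined with the Schatten duality rewrites
$$
(\|A+B\|_p^q+\|A-B\|_p^q)^{1/q}=\sup\bigl\{\operatorname{Re}\bigl[\tr((A+B)C^\ast)+\tr((A-B)D^\ast)\bigr]\bigr\}
$$
over finite-rank pairs $(C,D)$ with $\|C\|_q^p+\|D\|_q^p=1$. Hence it is enough to prove the bilinear bound $|\tr((A+B)C^\ast)+\tr((A-B)D^\ast)|\le 2^{1/q}$ for every such $(C,D)$.

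The second step is the three-lines construction. On the strip $S=\{\zeta:0\le\operatorname{Re}\zeta\le 1\}$ I take $1/p(\zeta)=(1+\zeta)/2$, so that $p(0)=q(0)=2$, $p(1)=1$, $q(1)=\infty$, and $p(\theta)=p$, $q(\theta)=q$ at $\theta=2/p-1$. Using polar decompositions $A=U_A|A|$, $B=U_B|B|$, $C=V_C|C|$, $D=V_D|D|$ and scalars $c=\|C\|_q$, $d=\|D\|_q$, define
\begin{align*}
A(\zeta)&=U_A|A|^{p/p(\zeta)},\qquad B(\zeta)=U_B|B|^{p/p(\zeta)},\\
C(\zeta)&=c^{\,p(1+\zeta)/2}V_C(|C|/c)^{q(1-\zeta)/2},\quad D(\zeta)=d^{\,p(1+\zeta)/2}V_D(|D|/d)^{q(1-\zeta)/2},
\end{align*}
and $F(\zeta)=\tr[(A(\zeta)+B(\zeta))C(\zeta)^\ast]+\tr[(A(\zeta)-B(\zeta))D(\zeta)^\ast]$. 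Then $F$ is holomorphic on the interior of $S$, bounded on $\overline S$, and $F(\theta)$ equals the bilinear quantity above. A routine calculation with the complex exponents shows that the calibrated scalar prefactors force $\|A(iy)\|_2^2=\|A\|_p^p$ and $\|C(iy)\|_2^2=c^p$ on $\operatorname{Re}\zeta=0$, and $\|A(1+iy)\|_1=\|A\|_p^p$ and $\|C(1+iy)\|_\infty\le c^p$ on $\operatorname{Re}\zeta=1$ (analogously for $B,D$). On the left boundary, H\"older in $\mathbb{B}_2$ and the operator parallelogram law give $|F(iy)|\le\sqrt{2(\|A\|_p^p+\|B\|_p^p)}\sqrt{c^p+d^p}=\sqrt 2$; on the right boundary, H\"older in the $\mathbb{B}_1$-$\mathbb{B}_\infty$ duality and the triangle inequality give $|F(1+iy)|\le c^p+d^p=1$. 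Hadamard's three-lines theorem then yields $|F(\theta)|\le(\sqrt 2)^{1-\theta}\cdot 1^\theta=2^{(1-\theta)/2}=2^{1/q}$, the bilinear bound required; taking the supremum over $(C,D)$ recovers (\ref{c3}). The companion (\ref{c4}) follows by the same construction applied to $T:(A,B)\mapsto(A+B,A-B)$ regarded as a map $\ell^q(\mathbb{B}_p)\to\ell^p(\mathbb{B}_p)$, interpolating between the endpoints $\ell^2(\mathbb{B}_2)\to\ell^2(\mathbb{B}_2)$ (parallelogram) and $\ell^1(\mathbb{B}_\infty)\to\ell^\infty(\mathbb{B}_\infty)$ (triangle $\|A\pm B\|_\infty\le\|A\|_\infty+\|B\|_\infty$).

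The main obstacle I anticipate is the careful tuning of the scalar normalizers $c^{p(1+\zeta)/2}$, $d^{p(1+\zeta)/2}$ in $C(\zeta),D(\zeta)$. A naive Stein extension $C(\zeta)=V_C|C|^{q/q(\zeta)}$ without these scalar prefactors gives $\|C(iy)\|_2^2=\|C\|_q^q$ on $\operatorname{Re}\zeta=0$, which under the $\ell^p(\mathbb{B}_q)$ normalization $c^p+d^p=1$ is not constant in $(c,d)$; the resulting boundary bound then picks up a factor of $(c^q+d^q)^{1/2}$, and the three-lines conclusion degrades from McCarthy's sharp constant $2$ to Clarkson's weaker $2^{q/p}$. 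Matching the scalar exponent exactly to the dual $\ell^p(\mathbb{B}_q)$ normalization is what forces both boundary constants $M_0=\sqrt 2$ and $M_1=1$ to be $(c,d)$-independent, hence what yields the sharp constant $2$.
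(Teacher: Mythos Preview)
Your argument for (\ref{c3}) is essentially the Fack--Kosaki three-lines proof that the paper follows (for general $n$, with $n=2$ recovering McCarthy): both interpolate the bilinear trace form between the parallelogram endpoint $p=2$ and the $\mathbb{B}_1$--$\mathbb{B}_\infty$ H\"older endpoint $p=1$, and both hinge on the same scalar recalibration of the dual variables---your prefactor $c^{\,p(1+\zeta)/2}$ is exactly the paper's $\|Y\|_q^{pz-q(1-z)}$ after the affine change $z=(1+\zeta)/2$---to make the two boundary bounds independent of the dual data and hence produce the sharp constant. Your closing paragraph on why the na\"ive Stein extension degrades the constant is precisely the crux.

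Where you diverge from the paper is in the passage to (\ref{c4}). The paper deduces the $p\ge 2$ case from the $1<p\le 2$ case by a duality argument (its Theorem~\ref{AKd}): pick norming functionals $x_i\in\mathbb{B}_p$ for given $\phi_i\in\mathbb{B}_q$, expand $\sum_i x_i(\phi_i)$ via the identity $n\sum_i x_i(\phi_i)=(\sum x_i)(\sum\phi_i)+\sum_{i<j}(x_i-x_j)(\phi_i-\phi_j)$, and apply (\ref{c3}) to the $x_i$. You instead run a second interpolation, bounding $T:\ell^q(\mathbb{B}_p)\to\ell^p(\mathbb{B}_p)$ between the endpoints $p=2$ and $p=\infty$. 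This works, but note that what it delivers directly is $\|A+B\|_p^p+\|A-B\|_p^p\le 2(\|A\|_p^q+\|B\|_p^q)^{p/q}$, which becomes (\ref{c4}) only after the substitution $(A,B)\mapsto\bigl(\tfrac{A+B}{2},\tfrac{A-B}{2}\bigr)$ using $T^2=2I$; that step should be made explicit. The paper's duality route avoids this extra twist and, more importantly, is what generalizes verbatim to the $n$-operator conjecture, where no involutive identity like $T^2=2I$ is available.
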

	However, the proof of \eqref{c3} given by McCarthy collapses. See the remark at the end of paper by Fack-Kosaki in \cite{FK86}.
	
	In view of the inequalities \eqref{c3} and \eqref{c4}, it is reasonable to corresponding generalizations (along the lines of the inequalities \eqref{c1n} and \eqref{c2n} of the partial inequalities \eqref{c1} and \eqref{c2}) to $n$-tuples of operators. More precisely, in \cite[Section 8.1]{AK12} entitled ``Clarkson inequalities for several operators", the authors presented the following conjecture:
	\begin{theorem}\label{AKC}
		Let $A_1,\ldots,A_n\in \mathbb{B}_p(\mathscr{H})$. Then for $1<p\le 2$,
		\begin{eqnarray}\label{AK1}
			\left\| \sum_{i=1}^nA_i\right\| _p^q+\sum_{1\le i<j\le n}\|A_i-A_j\|_p^q\le n\left( \sum_{i=1}^n\|A_i\|^p_p\right)^\frac{q}{p},
		\end{eqnarray}
		and for $p\ge 2$,
		\begin{eqnarray}\label{AK2}
			n\left( \sum_{i=1}^n\|A_i\|^p_p\right)^\frac{q}{p}\le \left\| \sum_{i=1}^nA_i\right\| _p^q+\sum_{1\le i<j\le n}\|A_i-A_j\|_p^q,
		\end{eqnarray}
		where $p,q>0$ and $\frac{1}{p}+\frac{1}{q}=1$.
	\end{theorem}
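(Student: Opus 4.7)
\emph{Plan.} My plan is to reformulate both inequalities as a single operator-norm estimate on the linear map
\[
T:(A_1,\dots,A_n)\longmapsto \Bigl(\sum_{i=1}^n A_i,\,(A_i-A_j)_{i<j}\Bigr),
\]
regarded as $T:\ell_p^n(\mathbb{B}_p)\to\ell_q^N(\mathbb{B}_p)$ with $N=1+\binom{n}{2}$, and then to deduce the case $p\ge2$ from the case $1<p\le2$ by duality. The structural identity I would exploit is $T^*T=nI$ in the trace pairing---the polarization of the parallelogram identity $\|\sum_i A_i\|_2^2+\sum_{i<j}\|A_i-A_j\|_2^2=n\sum_i\|A_i\|_2^2$---which is verified by direct expansion. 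Under this reformulation, (\ref{AK1}) is exactly the upper bound $\|T\|\le n^{1/q}$, while (\ref{AK2}) is the matching lower bound $\|TA\|_{\ell_q^N(\mathbb{B}_p)}\ge n^{1/q}\|A\|_{\ell_p^n(\mathbb{B}_p)}$.

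\emph{Step 1 (three-lines theorem for $1<p\le2$).} I would normalize $\sum_i\|A_i\|_p^p=1=\sum_k\|B_k\|_q^p$ for a generic dual tuple $B=(B_0,(B_{ij})_{i<j})$, take polar decompositions $A_i=U_i|A_i|$ and $B_k=V_k|B_k|$, and introduce the analytic families
\[
A_i(z)=U_i|A_i|^{p(1-z/2)},\qquad B_k(z)=V_k|B_k|^{qz/2}\|B_k\|_q^{\,p-(p+q)z/2}
\]
on the strip $0\le\operatorname{Re}(z)\le 1$, together with the entire function
\[
F(z)=\tr\Bigl(\sum_i A_i(z)\,B_0(z)^*\Bigr)+\sum_{i<j}\tr\bigl((A_i(z)-A_j(z))\,B_{ij}(z)^*\bigr).
\]
At $z=\theta:=2/q$ one checks that $A_i(\theta)=A_i$ and $B_k(\theta)=B_k$, so $F(\theta)$ equals the natural duality pairing. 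On the edge $\operatorname{Re}(z)=0$ the exponents give $\sum_i\|A_i(z)\|_1\le1$ and $\sum_k\|B_k(z)\|_\infty\le 1$, so the triangle inequality together with H\"older in $\ell_\infty^N(\mathbb{B}_1)\times\ell_1^N(\mathbb{B}_\infty)$ yields $|F(z)|\le 1$. On $\operatorname{Re}(z)=1$ the exponents give $\sum_i\|A_i(z)\|_2^2\le 1$ and $\sum_k\|B_k(z)\|_2^2\le 1$; applying the parallelogram identity to $(A_i(z))$ yields $\|TA(z)\|_{\ell_2^N(\mathbb{B}_2)}\le\sqrt n$, and Cauchy--Schwarz then gives $|F(z)|\le\sqrt n$. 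Hadamard's three-lines theorem delivers $|F(\theta)|\le 1^{1-\theta}(\sqrt n)^\theta=n^{1/q}$, and a supremum over $B$ finishes (\ref{AK1}).

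\emph{Step 2 (duality for $p\ge2$).} Since the conjugate exponent satisfies $q\in(1,2]$, Step 1 applied with the roles of $p$ and $q$ exchanged yields $\|TA'\|_{\ell_p^N(\mathbb{B}_q)}\le n^{1/p}\|A'\|_{\ell_q^n(\mathbb{B}_q)}$ for every $A'$. Combining $T^*T=nI$ with H\"older's inequality for the dual pair $(\ell_q^N(\mathbb{B}_p),\ell_p^N(\mathbb{B}_q))$,
\[
|\langle A,A'\rangle|=\tfrac{1}{n}|\langle TA,TA'\rangle|\le n^{-1/q}\|TA\|_{\ell_q^N(\mathbb{B}_p)}\,\|A'\|_{\ell_q^n(\mathbb{B}_q)},
\]
and taking the supremum over $A'$ in the unit ball of $\ell_q^n(\mathbb{B}_q)$ gives $\|A\|_{\ell_p^n(\mathbb{B}_p)}\le n^{-1/q}\|TA\|_{\ell_q^N(\mathbb{B}_p)}$, which is (\ref{AK2}).

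\emph{Main obstacle.} The hard part will be the simultaneous tuning of the two analytic families so that (a) $F$ is analytic and bounded on the closed strip, (b) the boundary norms are exactly $1$ and $\sqrt n$, and (c) the value $F(\theta)$ recovers the desired pairing. The scalar factor $\|B_k\|_q^{p-(p+q)z/2}$ is indispensable: without it, the $\ell_2^N(\mathbb{B}_2)$ bound at $\operatorname{Re}(z)=1$ would involve $\sum\|B_k\|_q^q$ rather than the normalization $\sum\|B_k\|_q^p=1$, and the interpolation would fail to hit the correct constant $n^{1/q}$. Standard technicalities---analytic extension of $|X|^z$ across the kernel of $X$ and applying the maximum modulus principle on an unbounded strip---are dealt with by approximating with invertible operators and by the usual log-convex majorant for $F$.
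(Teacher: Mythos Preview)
Your proposal is correct and follows essentially the same approach as the paper: the three-lines argument with analytic families built from polar decompositions (including the crucial scalar correction on the dual family) interpolating between the $L_1/L_\infty$ endpoint and the $L_2$ parallelogram identity is exactly the paper's Lemma, and your duality step via $T^*T=nI$ is precisely the polarization identity $\sum_i x_i(\phi_i)=\tfrac1n\bigl[(\sum x_i)(\sum\phi_i)+\sum_{i<j}(x_i-x_j)(\phi_i-\phi_j)\bigr]$ that the paper uses in its duality theorem. Your reformulation in terms of the operator $T$ and the norm bound $\|T:\ell_p^n(\mathbb{B}_p)\to\ell_q^N(\mathbb{B}_p)\|\le n^{1/q}$ is a cleaner packaging, but the mathematical content is identical.
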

	
	The above inequalities Audenaert and Kittaneh conjectured is a crucial piece of the Clarkson-type inequality puzzle. Notice that  both of \eqref{AK1} and \eqref{AK2} become equalities for $A_1=\cdots=A_n$, so these two inequalities are sharp. But little progress has been made in the decade since the conjecture was first proposed. It is worth mentioning that  Conde and Moslehian \cite{CM16} obtained a weaker version of Audenaert-Kittaneh's conjecture \ref{AKC} with a weaker coefficient $n^\frac{q}{2}$ instead of $n$ in 2016.
	\begin{thm}[Conde \& Moslehian]\label{CM}
		Let $A_1,\ldots,A_n\in \mathbb{B}_p(\mathscr{H})$. Then for $1<p\le 2$,
		\begin{eqnarray*}
			\left\| \sum_{i=1}^nA_i\right\| _p^q+\sum_{1\le i<j\le n}\|A_i-A_j\|_p^q\le n^\frac{q}{2}\left( \sum_{i=1}^n\|A_i\|^p_p\right)^\frac{q}{p},
		\end{eqnarray*} 
		and for $p\ge 2$,
		\begin{eqnarray*}
			n^\frac{q}{2}\left( \sum_{i=1}^n\|A_i\|^p_p\right)^\frac{q}{p}\le \left\| \sum_{i=1}^nA_i\right\|_p^q+\sum_{1\le i<j\le n}\|A_i-A_j\|_p^q,
		\end{eqnarray*}
		where $p,q>0$ and $\frac{1}{p}+\frac{1}{q}=1$.
	\end{thm}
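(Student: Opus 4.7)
The plan is to reformulate both (\ref{AK1}) and (\ref{AK2}) as norm bounds on a single linear map and then apply the Three-lines theorem in a dual formulation. Let
\begin{equation*}
T(A_1,\ldots,A_n) \;=\; \Bigl(\sum_{i=1}^n A_i,\; (A_i-A_j)_{1\le i<j\le n}\Bigr),
\end{equation*}
viewed as a linear map $\mathbb{B}_p(\mathscr H)^n\to\mathbb{B}_p(\mathscr H)^N$ with $N=1+\binom{n}{2}$. Then (\ref{AK1}) is equivalent to $\|T\|_{\ell_p^n(\mathbb{B}_p)\to\ell_q^N(\mathbb{B}_p)}\le n^{1/q}$, and (\ref{AK2}) is the reverse lower bound $\|TA\|_{\ell_q^N(\mathbb{B}_p)}\ge n^{1/q}\|A\|_{\ell_p^n(\mathbb{B}_p)}$ for every $A$. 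A direct computation shows that the scalar matrix of $T$ satisfies $T^{\ast}T=n\cdot\mathrm{id}_{\mathbb{C}^n}$; on the tensor product with $\mathbb{B}_2$ this yields the multi-operator parallelogram identity $\sum_\alpha\|(TA)_\alpha\|_2^2=n\sum_i\|A_i\|_2^2$, so $\|T\|_{\ell_2^n(\mathbb{B}_2)\to\ell_2^N(\mathbb{B}_2)}=\sqrt n$. At the other endpoint $(p,q)=(1,\infty)$, the triangle inequality gives $\|(TA)_\alpha\|_1\le\sum_i\|A_i\|_1$ for every $\alpha$, and hence $\|T\|_{\ell_1^n(\mathbb{B}_1)\to\ell_\infty^N(\mathbb{B}_1)}\le 1$.

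To establish (\ref{AK1}) for $1<p\le 2$ I would fix a dual tuple $(Y_\alpha)$ with $\sum_\alpha\|Y_\alpha\|_q^p\le 1$ and rewrite the pairing
\begin{equation*}
\sum_\alpha \tr(Y_\alpha^{\ast}(TA)_\alpha) \;=\; \sum_{i=1}^n \tr(X_i^{\ast}A_i),\qquad X_i \,:=\, Y_0 + \sum_{j>i}Y_{\{i,j\}} - \sum_{j<i}Y_{\{j,i\}}.
\end{equation*}
Using polar decompositions $A_i=V_i|A_i|$ and $X_i=U_i|X_i|$, I would then construct, in the manner of the complex interpolation proof of Riesz-Thorin, an analytic function $F(z)$ on the strip $0\le\mathrm{Re}\,z\le 1$ built from complex powers of $|A_i|$ and $|X_i|$, arranged so that $F$ equals $\sum_i\tr(X_i^{\ast}A_i)$ at the interior point corresponding to the given $p$ and so that its boundary values on $\mathrm{Re}\,z=0$ and $\mathrm{Re}\,z=1$ are controlled by H\"older's inequality together with the two endpoint bounds of the preceding paragraph. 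The Three-lines theorem then produces
\begin{equation*}
\Bigl|\sum_\alpha \tr(Y_\alpha^{\ast}(TA)_\alpha)\Bigr| \;\le\; n^{1/q}\Bigl(\sum_{i=1}^n\|A_i\|_p^p\Bigr)^{1/p},
\end{equation*}
and taking the supremum over admissible $(Y_\alpha)$ gives (\ref{AK1}).

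For (\ref{AK2}) with $p\ge 2$ I plan to deduce the inequality from (\ref{AK1}) by a duality argument that exploits the identity $T^{\ast}T=n\cdot\mathrm{id}$. Given $(A_i)$ with $p\ge 2$, set $X_i=V_i|A_i|^{p-1}$, so that $\sum_i\tr(X_i^{\ast}A_i)=\sum_i\|X_i\|_q^q=\sum_i\|A_i\|_p^p$ because $(p-1)q=p$, and define dual variables via the pseudo-inverse formulas $Y_0=n^{-1}\sum_i X_i$ and $Y_{\{i,j\}}=n^{-1}(X_i-X_j)$ for $i<j$; by $T^{\ast}T=n\cdot\mathrm{id}$ these satisfy $\sum_\alpha\tr(Y_\alpha^{\ast}(TA)_\alpha)=\sum_i\tr(X_i^{\ast}A_i)$ automatically. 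Since $1<q\le 2$, the already-proven (\ref{AK1}) applied with the pair $(q,p)$ in place of $(p,q)$ to the tuple $(X_i)$ bounds $\|\sum_i X_i\|_q^p+\sum_{i<j}\|X_i-X_j\|_q^p\le n\bigl(\sum_i\|X_i\|_q^q\bigr)^{p/q}$, which after division by $n^p$ becomes $\|(Y_\alpha)\|_{\ell_p^N(\mathbb{B}_q)}\le n^{-1/q}\|(A_i)\|_{\ell_p^n(\mathbb{B}_p)}^{p/q}$. Feeding this $(Y_\alpha)$ into the dual characterisation of the $\ell_q^N(\mathbb{B}_p)$-norm of $(TA)$ and using $1/p+1/q=1$ yields (\ref{AK2}).

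The hardest step is clearly the construction of the analytic family $F(z)$ in the second paragraph: it must interpolate a vector-valued bilinear pairing in which both the outer sequence exponent and the inner Schatten exponent vary simultaneously with $z$, and the combinatorial structure introduced by the $\binom{n}{2}$ difference terms adds a layer of bookkeeping beyond McCarthy's $n=2$ situation. This step is precisely where McCarthy's argument for (\ref{c3}) was noted to collapse in the remark of Fack-Kosaki, so the technical heart of the proof is a careful choice of the interpolating family of complex powers of the polar pieces of $(A_i)$ and $(X_i)$ that simultaneously matches the pairing at the interior point and the endpoint norm bounds on the two vertical boundary lines.
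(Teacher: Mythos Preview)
Two preliminary remarks. First, Theorem~\ref{CM} is merely \emph{cited} from \cite{CM16} in this paper; there is no proof here to compare against. Second, your proposal does not actually target Theorem~\ref{CM}: the bound $n^{1/q}$ you write down (which becomes $n$ after raising to the $q$-th power) is the sharper Audenaert--Kittaneh constant, not the Conde--Moslehian constant $n^{q/2}$. So what you have sketched is a proof of the paper's main result, and I will compare it to that.

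With that reinterpretation, your architecture---endpoint bounds for $T$ at $p=1$ and $p=2$, a three-lines interpolation, and a duality step exploiting $T^{*}T=n\cdot\mathrm{id}$---is exactly the paper's strategy (Lemma~\ref{leBK} together with Theorem~\ref{AKd}). Your duality argument for $p\ge 2$ is correct and is an explicit variant of the paper's Riesz-representation formulation.

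The genuine gap lies in the interpolation step. You propose to build $F(z)$ from complex powers of $|A_i|$ and of the \emph{pulled-back} variables $|X_i|$, where $X_i=(T^{*}Y)_i$. But the endpoint bounds you wish to invoke are operator-norm bounds for $T$, and those are naturally expressed through the $N$ dual variables $(Y_\alpha)$, not through the $n$ variables $(X_i)$. On the boundary lines, an analytic family in $|X_i|$ yields estimates in terms of $\|X_i\|_q$; converting these back to $\sum_\alpha\|Y_\alpha\|_q^p$ via the triangle inequality and H\"older costs an extra power of $n$, and the interpolated constant is no longer $n^{1/q}$. The paper's decisive technical choice is to deform the $(Y_\alpha)$ themselves, setting $Y(z)=\|Y\|_q^{pz-q(1-z)}V|Y|^{q(1-z)}$ and likewise for each $Y_{i,j}(z)$, and only \emph{after} deforming to expand $Y(z)B(z)+\sum_{i<j}Y_{i,j}(z)B_{i,j}(z)$ as $\sum_k(\text{combination of }Y_\alpha(z))A_k(z)$. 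At $x=1$ each $|Y_\alpha|^{q(1-z)}$ is unitary, so the crude bound $|\tr(\cdot)|\le\|\cdot\|_1$ gives exactly $(\sum_\alpha\|Y_\alpha\|_q^p)(\sum_k\|A_k\|_p^p)$ with no loss; at $x=1/2$ Cauchy--Schwarz plus the $p=2$ parallelogram identity supplies the $\sqrt n$. Keeping the analytic family on the $N$-side rather than the $n$-side is precisely the repair of McCarthy's collapsed argument, and your sketch as written does not make this choice.
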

	
	This paper is organized as follows: in Section \ref{s3}, by using Three-lines theorem for a certain analytic function defined in terms of the trace and a duality argument, we prove Audenaert-Kittaneh conjecture \ref{AKC}. It is a completely different path from Conde and Moslehian. In Section \ref{s4}, we present an open problem which is a possible generalization of Bourin and Lee in \cite[Theorem 4.1]{BL20}.
	
	\section{Proof of Audenaert-Kittaneh's conjecture}\label{s3}
	In this section, we give a proof of Audenaert-Kittaneh's conjecture \ref{AKC}.
	
	First, we introduced Hadamard three-lines theorem, which appears in the classic complex analysis textbook.
	\begin{lem}[Three-lines theorem, see \cite{U08}] \label{le1}
		Let $f(z)$ be a bounded function of $z=x+iy$ defined on the strip
		\[
		\{x+iy:a\le x\le b\},
		\]
		holomorphic in the interior of the strip and continuous on the whole strip. If 
		\[
		M(x):=\sup\limits_{y}\left|f(x+iy)\right|,
		\]
		then $\log M(x)$ is a convex function on $[a,b]$. In other words, if $x=ta+(1-t)b$ with $0\le t\le 1$, then
		\[
		M(x)\le M(a)^tM(b)^{1-t}.
		\]
	\end{lem}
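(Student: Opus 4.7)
The plan is to deduce the convexity of $\log M$ from the Phragm\'en--Lindel\"of maximum principle applied to a suitably normalized holomorphic function. By the affine change of variables $w=(z-a)/(b-a)$, I first reduce to the standard strip $S=\{w:0\le\mathrm{Re}(w)\le 1\}$, so that the goal becomes $M(x)\le M(0)^{1-x}M(1)^{x}$ for $x\in[0,1]$. To avoid division by zero when $M(0)$ or $M(1)$ vanishes, I will run the argument with $M(0)+\delta$ and $M(1)+\delta$ and let $\delta\to 0^{+}$ at the end; hence I may assume $M(0),M(1)>0$.

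The key step is to introduce the auxiliary function
\[
g(z):=\frac{f(z)}{M(0)^{1-z}\,M(1)^{z}},
\]
which is holomorphic in the open strip and continuous on $\overline{S}$. Because $M(0)$ and $M(1)$ are positive reals, $|M(0)^{1-z}\,M(1)^{z}|=M(0)^{1-\mathrm{Re}(z)}M(1)^{\mathrm{Re}(z)}$, so the denominator is bounded and bounded below on $S$; combined with the boundedness of $f$, this makes $g$ bounded on $S$. A direct computation gives $|g(iy)|\le 1$ and $|g(1+iy)|\le 1$ for every $y\in\mathbb{R}$, so the theorem reduces to showing $|g|\le 1$ throughout $S$.

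To establish this, I would carry out the standard Phragm\'en--Lindel\"of device for a bounded holomorphic function on a strip. For $\epsilon>0$, set $h_{\epsilon}(z):=g(z)\,e^{\epsilon z^{2}}$; since $|e^{\epsilon z^{2}}|=e^{\epsilon(x^{2}-y^{2})}$, the multiplier is bounded by $e^{\epsilon}$ on the two vertical edges of $S$ and decays to $0$ as $|y|\to\infty$, uniformly in $x\in[0,1]$. Fix an interior point $z_{0}=x_{0}+iy_{0}$ and choose $R>|y_{0}|$ large enough that $\|g\|_{\infty}\,e^{\epsilon(1-R^{2})}\le e^{\epsilon}$. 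Applying the maximum modulus principle to $h_{\epsilon}$ on the rectangle $[0,1]\times[-R,R]$ yields $|h_{\epsilon}(z_{0})|\le e^{\epsilon}$, whence $|g(z_{0})|\le e^{\epsilon}\cdot e^{\epsilon(y_{0}^{2}-x_{0}^{2})}$. Letting $\epsilon\to 0^{+}$ gives $|g(z_{0})|\le 1$, as required.

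Substituting back yields $|f(z)|\le M(0)^{1-\mathrm{Re}(z)}M(1)^{\mathrm{Re}(z)}$; taking the supremum over $\mathrm{Im}(z)$ gives $M(x)\le M(0)^{1-x}M(1)^{x}$, which after taking logarithms is exactly the asserted convexity of $\log M$. The principal technical obstacle is the Phragm\'en--Lindel\"of step, since the ordinary maximum modulus principle fails on the unbounded strip without a growth hypothesis; the damping factor $e^{\epsilon z^{2}}$, whose modulus decays in the imaginary direction, combined with the boundedness of $f$, is what makes the argument go through.
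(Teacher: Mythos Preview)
Your argument is correct and is the standard proof of Hadamard's three-lines theorem: normalize by the log-linear interpolant $M(0)^{1-z}M(1)^{z}$, then use the damping factor $e^{\epsilon z^{2}}$ to reduce the Phragm\'en--Lindel\"of principle on the strip to the ordinary maximum modulus principle on rectangles. The details are in order, including your handling of the degenerate case $M(0)M(1)=0$ via the $\delta$-perturbation.

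There is nothing to compare against in the paper itself: the lemma is stated with a reference to a textbook (Ullrich, \emph{Complex Made Simple}) and is used as a black box, with no proof given. Your write-up simply supplies the classical argument that the paper omits.
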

	
	Next, we give an elegant use of Three-lines lemma \ref{le1} for a certain analytic function defined in terms of the trace, our proof is an extension of the proof in Fack-Kosaki in \cite[Theorem 5.3]{FK86}.
	\begin{lem}\label{leBK}
		Let $A_1, \ldots, A_n\in \mathbb{B}_p(\mathscr{H}) $ for some $1<p\le 2$. Denote $B=\sum\limits_{k=1}^nA_k, B_{i,j}=A_i-A_j, 1\le i<j\le n$. Let $Y, Y_{i,j}$ be operators in the dual class $\mathbb{B}_q(\mathscr{H})$. Then
		\begin{eqnarray}\label{e7}
			\left|\tr \left( YB+\sum_{1\le i<j\le n}Y_{i,j}B_{i,j}\right) \right|
			\le n^\frac{1}{q}\left(\sum_{k=1}^n \|A_k\|_p^p\right)^\frac{1}{p}
			\left(\left\|Y\right\|_q^p+\sum_{1\le i<j\le n}\|Y_{i,j}\|_q^p \right)^\frac{1}{p}.  
		\end{eqnarray} 
	\end{lem}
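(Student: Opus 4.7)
The plan is to apply the Hadamard three-lines theorem (Lemma \ref{le1}) to an analytic operator-valued function built from the polar decompositions, inserting scalar analytic weights so that the $\ell^p$-combination of $B_q$-norms appears on both boundary lines. Assuming without loss of generality that $Y\ne 0$ and each $Y_{i,j}\ne 0$, I take polar decompositions $A_k = U_k|A_k|$, $Y = V|Y|$, $Y_{i,j} = V_{i,j}|Y_{i,j}|$ and define
\begin{align*}
A_k(z) &= U_k|A_k|^{p(1+z)/2},\\
Y(z) &= \|Y\|_q^{\gamma(z)}\, V|Y|^{q(1-z)/2}, \qquad Y_{i,j}(z) = \|Y_{i,j}\|_q^{\gamma(z)}\, V_{i,j}|Y_{i,j}|^{q(1-z)/2},
\end{align*}
where $\gamma(z) = \tfrac{1}{2}\bigl[(p-q) + (p+q)z\bigr]$ is chosen so that $\gamma(2/p - 1) = 0$. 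Setting $B(z) = \sum_k A_k(z)$, $B_{i,j}(z) = A_i(z) - A_j(z)$ and
\[
F(z) = \tr\left(Y(z) B(z) + \sum_{1\le i<j\le n} Y_{i,j}(z) B_{i,j}(z)\right),
\]
a standard H\"older check shows $F$ is analytic and bounded on the strip $\{0 \le \operatorname{Re}(z) \le 1\}$, and at the interior point $z_0 = 2/p - 1 \in (0,1)$ the weights and operator powers collapse, so $F(z_0) = \tr(YB + \sum_{i<j} Y_{i,j}B_{i,j})$.

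Next, I estimate $F$ on the two boundary lines. On $\operatorname{Re}(z) = 0$, the weight $\gamma(0) = (p-q)/2$ makes $\|Y(iy)\|_2^2 = \|Y\|_q^p$, $\|Y_{i,j}(iy)\|_2^2 = \|Y_{i,j}\|_q^p$ and $\|A_k(iy)\|_2^2 = \|A_k\|_p^p$; Cauchy--Schwarz on the pairing combined with the parallelogram identity $\|B\|_2^2 + \sum_{i<j}\|B_{i,j}\|_2^2 = n\sum_k\|A_k\|_2^2$ for $n$ operators at $p=2$ yields
\[
M(0) \le \Bigl(\|Y\|_q^p + \sum_{i<j}\|Y_{i,j}\|_q^p\Bigr)^{1/2}\Bigl(n\sum_k\|A_k\|_p^p\Bigr)^{1/2}.
\]
On $\operatorname{Re}(z) = 1$, the weight $\gamma(1) = p$ makes $\|Y(1+iy)\|_\infty = \|Y\|_q^p$, $\|Y_{i,j}(1+iy)\|_\infty = \|Y_{i,j}\|_q^p$ and $\|A_k(1+iy)\|_1 = \|A_k\|_p^p$; term-by-term H\"older pairings $B_\infty \cdot B_1$ combined with $\|A_i\|_p^p + \|A_j\|_p^p \le \sum_k\|A_k\|_p^p$ give
\[
M(1) \le \Bigl(\|Y\|_q^p + \sum_{i<j}\|Y_{i,j}\|_q^p\Bigr)\sum_k\|A_k\|_p^p.
\]
Applying Lemma \ref{le1} at $z_0 = (1-\theta)\cdot 0 + \theta\cdot 1$ with $\theta = 2/p - 1$ and $1-\theta = 2/q$, and using the identity $\tfrac{1}{q} + (\tfrac{2}{p} - 1) = \tfrac{1}{p}$, the product $M(0)^{2/q}M(1)^{2/p-1}$ simplifies to $n^{1/q}\bigl(\sum_k\|A_k\|_p^p\bigr)^{1/p}\bigl(\|Y\|_q^p + \sum_{i<j}\|Y_{i,j}\|_q^p\bigr)^{1/p}$, matching the right-hand side of (\ref{e7}) in the form required for the $\ell^q(B_p)$--$\ell^p(B_q)$ duality that converts the inequality into the Audenaert--Kittaneh conjecture.

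The main obstacle is the design of the scalar weight $\|Y\|_q^{\gamma(z)}$. Without it, the natural extension $Y(z) = V|Y|^{q(1-z)/2}$ produces the left-boundary combination $(\|Y\|_q^q + \sum_{i<j}\|Y_{i,j}\|_q^q)^{1/2}$, and three-lines then yields only $n^{1/p}(\sum_k\|A_k\|_p^p)^{1/p}(\|Y\|_q^q + \sum_{i<j}\|Y_{i,j}\|_q^q)^{1/q}$, which is off by a factor $n^{1/p - 1/q}$ in extremal cases (e.g.\ all $Y_{i,j}=0$) and therefore insufficient for deducing the conjecture. The linear $\gamma$, vanishing at $z_0$ and taking the values $(p-q)/2$ and $p$ at $z = 0$ and $z = 1$ respectively, is precisely the normalization that transforms the natural $\ell^2$- and $\ell^\infty$-boundary combinations into the $\ell^p$-combination of $B_q$-norms required by the duality pairing $\ell^q(B_p)^* = \ell^p(B_q)$.
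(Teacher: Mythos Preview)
Your proof is correct and follows essentially the same approach as the paper: construct analytic families from polar decompositions, insert the scalar weight $\|Y\|_q^{\gamma(z)}$ on the $Y$-family, estimate on the two boundary lines via the $p=2$ parallelogram identity and the $B_\infty\!-\!B_1$ H\"older pairing respectively, and interpolate by three-lines. Your strip $[0,1]$ with interior point $z_0=2/p-1$ is simply the affine reparametrization $w=2z-1$ of the paper's strip $[1/2,1]$ with interior point $1/p$; under this change of variables your weight exponent $\gamma(w)=\tfrac12[(p-q)+(p+q)w]$ becomes exactly the paper's $pz-q(1-z)$, and the operator exponents match as well, so the two arguments are identical in substance.
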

	
	\begin{proof}
		Let $A_k=\left|A_k\right|W_k,Y=V\left|Y\right|$ and $Y_{i,j}=V_{i,j}\left|Y_{i,j}\right|$ be right, left and left polar decompositions of $A_k, Y$ and $Y_{i,j}$, respectively. Here, $W_j,V$ and $V_{i,j}$ are partial isometries.
		
		We have $1/2\le 1/p<1$. For the complex variable $z=x+iy$ with $1/2\le x\le 1$, let
		\begin{eqnarray*}
			A_k(z)&=&\left|A_k\right|^{pz}W_k;\\
			B(z)&=&\sum\limits_{k=1}^nA_k(z);\\
			B_{i,j}(z)&=&A_i(z)-A_j(z);\\
			Y(z)&=&\|Y\|_q^{pz-q(1-z)}V\left|Y\right|^{q(1-z)};\\
			Y_{i,j}(z)&=&\|Y_{i,j}\|_q^{pz-q(1-z)}V_{i,j}\left|Y_{i,j}\right|^{q(1-z)}.
		\end{eqnarray*}
		Note that $A_k(1/p)=A_k, Y(1/p)=Y$ and $Y_{i,j}(1/p)=Y_{i,j}$. Let
		\[
		f(z)=\tr\left(Y(z)B(z)+ \sum_{1\le i < j<n} Y_{i,j}(z)B_{i,j}(z)\right) .
		\]
		The left-hand side of \eqref{e7} is $\left|f(1/p)\right|$. We can estimate this if we have bounds for $\left|f(z)\right|$ at $x=1/2$ and $x=1$. If $x=1$, we have
		\begin{eqnarray*}
			\left|\tr  Y(z)A_{k}(z) \right|
			&=&\|Y\|_q^p\left|\tr  V\left|Y\right|^{-iqy}\left|A_k\right|^{p(1+iy)}W_k \right|;\\
			\left|\tr  Y_{i,j}(z)A_{k}(z) \right|
			&=&\|Y_{i,j}\|_q^p\left|\tr  V_{i,j}\left|Y_{i,j}\right|^{-iqy}\left|A_k\right|^{p(1+iy)}W_k\right|.
		\end{eqnarray*}
		
		Using the information that for any operator $T$,
		\[
		\left|\tr T\right|\le \|T\|_1 \text{     and     } \|XTY\|\le \|X\|_\infty\|T\|\|X\|_\infty
		\]
		hold for three operators $X,T, Z$ and trace norm $\|\cdot\|$, spectral norm $\|\cdot\|_\infty$, any unitarily invariant norms $\|\cdot\|$, we see that
		\begin{eqnarray*}
			\left|\tr Y(z)A_{k}(z)\right|&\le& \|Y\|_q^p\left\|A_k\right\|_p^p;\\
			\left|\tr Y_{i,j}(z)A_{k}(z)\right|&\le& \|Y_{i,j}\|_q^p\left\|A_k\right\|_p^p.
		\end{eqnarray*}
		
		Hence
		\begin{eqnarray}
			\left|f(z)\right|
			&=&\left|\tr \left( Y(z)B(z)+\sum_{1\le i<j\le n}Y_{i,j}(z)B_{i,j}(z)\right) \right| \nonumber\\
			&=&\left|\tr \left( Y(z)\sum_{k=1}^nA_k(z)+\sum_{1\le i<j\le n}Y_{i,j}(z)\left( A_i(z)-A_j(z)\right) \right) \right| \nonumber\\
			&=&\left| \tr \sum_{k=1}^n\left( Y(z)-\sum\limits_{i=1}^{k-1}Y_{i,k}(z)+\sum\limits_{i=k+1}^nY_{k,i}(z)\right) A_k(z)\right|\nonumber \\
			&\le&\sum_{k=1}^n\left( \left| \tr  Y(z)A_k(z)\right|+\sum\limits_{i=1}^{k-1}\left| \tr Y_{i,k}(z)A_k(z)\right| +\sum\limits_{i=k+1}^n\left| \tr Y_{k,i}(z)A_k(z)\right|\right)   \nonumber \\
			&\le& \sum_{k=1}^n\left(\|Y\|_q^p+\sum\limits_{i=1}^{k-1}\|Y_{i,k}\|_q^p+\sum\limits_{i=k+1}^n\|Y_{k,i}\|_q^p\right)\|A_k\|_p^p   \nonumber \\
			&\le &\left(\left\|Y\right\|_q^p+\sum_{1\le i<j\le n}\|Y_{i,j}\|_q^p \right)\left(\sum_{k=1}^n \|A_k\|_p^p\right) \label{e8*}
		\end{eqnarray}
		when $x=1$.
		
		When $x=1/2$, the operators $A_k(z)$ and $Y_{i,j}(z)$ are in $\mathbb{B}_2(\mathscr{H})$ and 
		\begin{eqnarray*}
			\left|f(z)\right|
			&\le&\left|\tr Y(z)B(z)\right|+\sum_{1\le i<j\le n}\left|\tr Y_{i,j}(z)B_{i,j}(z)\right|\\
			&\le&\|Y(z)\|_2\|B(z)\|_2+\sum_{1\le i<j\le n}\|Y_{i,j}(z)\|_2\|B_{i,j}(z)\|_2\\
			&\le&\left(\left\|Y(z)\right\|_2^2+\sum_{1\le i<j\le n}\|Y_{i,j}(z)\|_2^2 \right)^\frac{1}{2}
			\left(\left\|B(z)\right\|_2^2+\sum_{1\le i<j\le n}\|B_{i,j}(z)\|_2^2 \right)^\frac{1}{2}\\
			&=& n^\frac{1}{2} \left(\left\|Y(z)\right\|_2^2+\sum_{1\le i<j\le n}\|Y_{i,j}(z)\|_2^2 \right)^\frac{1}{2}
			\left(\sum_{i=1}^n\|A_i(z)\|_2^2 \right)^\frac{1}{2}.  
		\end{eqnarray*}
		The equality at the last step is a consequence of Theorem \ref{CM}, specialised to the case $p=2$. Note that when $x=1/2$, we have $\|A_k(z)\|_2^2=\|A_k\|_p^p$, and $\|Y_{i,j}(z)\|_2^2=\|Y_{i,j}\|_q^p$. Hence
		\begin{eqnarray}\label{e8}
			\left|f(z)\right|\le n^\frac{1}{2} \left(\left\|Y\right\|_q^p+\sum_{1\le i<j\le n}\|Y_{i,j}\|_q^p \right)^\frac{1}{2}
			\left(\sum_{k=1}^n\|A_k\|_p^p \right)^\frac{1}{2}, 
		\end{eqnarray}
		when $x=1/2$. Let $M_1,M_2$ be the right sides of \eqref{e8*}, \eqref{e8}, repectively. Then by Lemma \ref{le1}, we have, for $1/2\le 1/p<1$,
		\[
		\left|f(1/p)\right|\le M_1^{2(1/p-1/2)}M_2^{2(1-1/p)}.
		\]
		This gives \eqref{e7}.
	\end{proof}
	
	Now, we give duality for Audenaert-Kittaneh's Conjecture by using a technique from Ball, Carlen and Lieb in \cite[Lemma 5 and Lemma 6]{BCL94}.
	\begin{thm}\label{AKd}
		(Duality for Audenaert-Kittaneh conjecture \ref{AKC})
		Let $1<p\le 2$ and $\dfrac{1}{p}+\dfrac{1}{q}=1$. The duality of
		\begin{eqnarray}\label{e2.4}
			\left\| \sum_{i=1}^nx_i\right\| _p^q+\sum_{1\le i<j\le n}\|x_i-x_j\|_p^q\le	n\left( \sum_{i=1}^n\|x_i\|^p_p\right)^\frac{q}{p}
		\end{eqnarray}
		for all $x_i\in \mathbb{B}_p(\mathscr{H})$ implies the validity of 
		\begin{eqnarray}\label{e2.3}
			n\left( \sum_{i=1}^n\|\phi_i\|^q_q\right)^\frac{p}{q}\le \left\| \sum_{i=1}^n\phi_i\right\| _q^p+\sum_{1\le i<j\le n}\|\phi_i-\phi_j\|_q^p
		\end{eqnarray}
		for all $\phi_i\in \mathbb{B}_q(\mathscr{H})$. 
	\end{thm}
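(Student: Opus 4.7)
The plan is a duality argument that pairs the assumed inequality (\ref{e2.4}) against a carefully chosen tuple of dual operators via the trace functional, in the spirit of Lemmata 5 and 6 of Ball--Carlen--Lieb \cite{BCL94}.

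Fix $\phi_1,\ldots,\phi_n\in\mathbb{B}_q(\mathscr{H})$ and let $A_1,\ldots,A_n\in\mathbb{B}_p(\mathscr{H})$ be arbitrary. Set $B=\sum_k A_k$ and $B_{ij}=A_i-A_j$ for $i<j$. The same regrouping used in Lemma \ref{leBK} yields the trace identity
\[
\tr(Y_0 B)+\sum_{i<j}\tr(Y_{ij}B_{ij})=\sum_{i=1}^n\tr\!\Bigl(\bigl(Y_0+\sum_{k>i}Y_{ik}-\sum_{k<i}Y_{ki}\bigr)A_i\Bigr),
\]
valid for any $Y_0,Y_{ij}\in\mathbb{B}_q(\mathscr{H})$. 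Applying Schatten--H\"older on each coordinate together with the H\"older inequality over the $1+\binom{n}{2}$ indices bounds the absolute value of the left-hand side by
\[
\Bigl(\|Y_0\|_q^p+\sum_{i<j}\|Y_{ij}\|_q^p\Bigr)^{1/p}\Bigl(\|B\|_p^q+\sum_{i<j}\|B_{ij}\|_p^q\Bigr)^{1/q},
\]
and the second factor is at most $n^{1/q}\bigl(\sum_k\|A_k\|_p^p\bigr)^{1/p}$ by the hypothesis (\ref{e2.4}).

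Now I would make the explicit dual choice $Y_0=\frac{1}{n}\sum_k\phi_k$ and $Y_{ij}=\frac{1}{n}(\phi_i-\phi_j)$. A short telescoping verifies that $Y_0+\sum_{k>i}Y_{ik}-\sum_{k<i}Y_{ki}=\phi_i$ for every $i$, so the trace identity collapses to $\sum_i\tr(\phi_i A_i)$, while the $Y$-norm factor becomes $\frac{1}{n}\bigl(\|\sum_k\phi_k\|_q^p+\sum_{i<j}\|\phi_i-\phi_j\|_q^p\bigr)^{1/p}$. Combining everything yields, for every $(A_i)\in\mathbb{B}_p(\mathscr{H})^n$,
\[
\Bigl|\sum_{i=1}^n\tr(\phi_i A_i)\Bigr|\le n^{1/q-1}\Bigl(\bigl\|\textstyle\sum_k\phi_k\bigr\|_q^p+\sum_{i<j}\|\phi_i-\phi_j\|_q^p\Bigr)^{1/p}\Bigl(\sum_k\|A_k\|_p^p\Bigr)^{1/p}.
\]
Taking the supremum of the left-hand side over $\bigl(\sum_k\|A_k\|_p^p\bigr)^{1/p}\le 1$ gives $\bigl(\sum_k\|\phi_k\|_q^q\bigr)^{1/q}$ by the $\ell_p(\mathbb{B}_p)$--$\ell_q(\mathbb{B}_q)$ duality; then raising to the $p$-th power and using $1/q-1=-1/p$ collapses the $n$-powers to produce exactly (\ref{e2.3}).

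The whole argument is really the abstract identity $\|T\|=\|T^*\|$ applied to the linear map $T\colon(A_i)\mapsto(\sum_i A_i,(A_i-A_j)_{i<j})$ between the appropriate Schatten-valued direct sums, so there is no analytic obstacle once (\ref{e2.4}) is in hand. The only steps that demand care are the telescoping identity pinning down $\psi_i=\phi_i$ for the prescribed $(Y_0,Y_{ij})$, and the exponent bookkeeping via $1/p+1/q=1$ that makes the $n$-powers collapse to the sharp constant $n$ (as opposed to the weaker $n^{q/2}$ of Conde--Moslehian's Theorem \ref{CM}).
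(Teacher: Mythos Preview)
Your proof is correct and is essentially the same duality argument as the paper's: both rest on the polarization identity $n\sum_i\langle x_i,\phi_i\rangle=\langle\sum_i x_i,\sum_i\phi_i\rangle+\sum_{i<j}\langle x_i-x_j,\phi_i-\phi_j\rangle$, followed by Schatten--H\"older on each term, H\"older over the $1+\binom{n}{2}$ indices, and then the hypothesis (\ref{e2.4}). The only cosmetic difference is that the paper fixes the norming functionals $x_i$ for $\phi_i$ at the outset, whereas you keep the $A_i$ arbitrary and take a supremum at the end---your $\|T\|=\|T^*\|$ framing is a nice way to package the same computation.
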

	
	\begin{proof}
		Suppose \eqref{e2.4} holds in $\mathbb{B}_p(\mathscr{H})$, we try to establish \eqref{e2.3}. Let $\phi_i(1\le i\le n)\in \mathbb{B}_q(\mathscr{H})$. By Riesz representation theorem, we know there are unit vectors $\mu_i$ in the dual space of $\mathbb{B}_q(\mathscr{H})$ (i.e., $\mathbb{B}_p(\mathscr{H})$) such that
		\begin{eqnarray*}
			\mu_i(\phi_i)=\|\phi_i\|_q.
		\end{eqnarray*}
		Define $x_i\in \mathbb{B}_p(\mathscr{H})$ by
		\begin{eqnarray*}
			x_i=\left(\sum_{i=1}^n\|\phi_i\|^q_q\right)^{-\frac{1}{p}}\left\|\phi_i\right\|_q^{q-1}\mu_i.
		\end{eqnarray*}
		Then
		\begin{eqnarray}\label{eo1}
			\sum\limits_{i=1}^n\|x_i\|_p^p=1,
		\end{eqnarray}
		and we have
		\begin{eqnarray*}
			\left( \sum_{i=1}^n\|\phi_i\|^q_q\right)^\frac{1}{q}
			&=&\sum\limits_{i=1}^nx_i(\phi_i)\\
			&=&\dfrac{(\sum\limits_{i=1}^nx_i)(\sum\limits_{i=1}^n\phi_i)+\sum\limits_{1\le i < j\le n}(x_i-x_j)(\phi_i-\phi_j)}{n}\\
			&\le&\dfrac{\left\| \sum\limits_{i=1}^nx_i\right\|_p \left\| \sum\limits_{i=1}^n\phi_i\right\|_q +\sum\limits_{1\le i < j\le n}\left\| x_i-x_j\right\|_p \left\| \phi_i-\phi_j\right\|_q }{n}\\
			&&\text{ (By using triangle inequality)}\\
			&\le&\dfrac{\left(\left\| \sum\limits_{i=1}^nx_i\right\| _p^q+\sum\limits_{1\le i<j\le n}\|x_i-x_j\|_p^q\right)^\frac{1}{q}
				\left( \left\| \sum\limits_{i=1}^n\phi_i\right\| _q^p+\sum\limits_{1\le i<j\le n}\|\phi_i-\phi_j\|_q^p \right)^\frac{1}{p}  }{n}\\
			&&\text{ (By using H\"{o}lder inequality)}\\	
			&\le& \dfrac{n^\frac{1}{q}\left( \sum\limits_{i=1}^n\|x_i\|^p_p\right)^\frac{1}{p}
				\left( \left\| \sum\limits_{i=1}^n\phi_i\right\| _q^p+\sum\limits_{1\le i<j\le n}\|\phi_i-\phi_j\|_q^p\right) ^\frac{1}{p}}{n}
			\ \text{(By \eqref{e2.4})}\\ 
			&=& \dfrac{\left( \left\| \sum\limits_{i=1}^n\phi_i\right\| _q^p+\sum\limits_{1\le i<j\le n}\|\phi_i-\phi_j\|_q^p\right) ^\frac{1}{p}}{n^\frac{1}{p}}
			\ \text{(By \eqref{eo1})}.
		\end{eqnarray*}
		That is, \eqref{e2.4} implies \eqref{e2.3}.
	\end{proof}
	
	\noindent\emph{Proof of Audenaert-Kittaneh's Conjecture \ref{AKC}: }
	Now, let $B=U\left|B\right|,B_{i,j}=U_{i,j}\left|B_{i,j}\right|$ be polar decompositions, and let
	\[
	Y=\left\|B\right\|_p^{q-p}\left|B\right|^{p-1}U^*,\qquad
	Y_{i,j}=\left\|B_{i,j}\right\|_p^{q-p}\left|B_{i,j}\right|^{p-1}U_{i,j}^*.
	\]
	It is easy to see that
	\[
	\tr YB=\left\|B\right\|_p^q=\left\|Y\right\|_q^p,\qquad
	\tr Y_{i,j}B_{i,j}=\left\|B_{i,j}\right\|_p^q=\left\|Y_{i,j}\right\|_q^p.
	\]
	So from Lemma \ref{leBK}, we get 
	\[
	\|B\|_p^q+\sum_{1\le i<j\le n}\|B_{i,j}\|_p^q\le n^\frac{1}{q}\left(\sum_{i=1}^n\left\|A_i\right\|_p^p \right)^{1/p}
	\left(\|B\|_p^q+\sum_{1\le i<j\le n}\|B_{i,j}\|_p^q\right)^{1/p}.
	\]
	This is the same as saying that
	\[
	\left\| \sum_{i=1}^nA_i\right\| _p^q+\sum_{1\le i<j\le n}\|A_i-A_j\|_p^q\le n\left( \sum_{i=1}^n\|A_i\|^p_p\right)^\frac{q}{p},\qquad 1<p\le 2.
	\]
	This proves the conjecture for $1<p\le 2$. The reverse inequality for $2\le p<\infty$ can be obtained from Theorem \ref{AKd}. \qed
	
	\section{Further remarks}\label{s4}
	There exists a recent considerable improvement of \eqref{c2} in the matrix setting due to J.-C. Bourin and E.-Y. Lee \cite[Theorem 4.1]{BL20}. 
	\begin{thm}\label{BLm}
		Let $A,B$ be two $m\times m$ matrices and $p>2$. Then there exists two unitary matrices $U,V$ such that
		\[
		U\left|A+B\right|^pU^*+V\left|A-B\right|^pV^*\le 2^{p-1}\left(\left|A\right|^p+\left|B\right|^p \right). 
		\]
		For $0<p\le 2$, the inequality is reversed.
	\end{thm}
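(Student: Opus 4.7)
The natural strategy is to lift the scalar Clarkson inequality (\ref{c2}) to an operator inequality via a $2\times 2$ block-matrix dilation, in the spirit of Bhatia--Kittaneh and Bourin's earlier work. Using the Jordan dilation $\widehat A = \begin{pmatrix} 0 & A \\ A^{*} & 0 \end{pmatrix}$ (and similarly $\widehat B$), one reduces to the case of Hermitian matrices, since singular values of $A$ appear as eigenvalues of $\widehat A$ with double multiplicity. Working in the Hermitian case, the key object is the block matrix
\[
N \;=\; \begin{pmatrix} A & B \\ B & A \end{pmatrix},
\]
together with the Hadamard unitary $U_{H} = \tfrac{1}{\sqrt{2}}\begin{pmatrix} I & I \\ I & -I \end{pmatrix}$. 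A direct computation gives $U_{H}\,N\,U_{H} = \mathrm{diag}(A+B,\,A-B)$, so $|N|^{p}$ is unitarily conjugate to $\mathrm{diag}(|A+B|^{p}, |A-B|^{p})$ and one obtains the identity
\[
\bigl[\,|N|^{p}\,\bigr]_{11} \;=\; \tfrac{1}{2}\bigl(|A+B|^{p} + |A-B|^{p}\bigr).
\]
Dually, $\widetilde{N} := U_{H}\,\mathrm{diag}(A,B)\,U_{H} = \tfrac12\begin{pmatrix} A+B & A-B \\ A-B & A+B \end{pmatrix}$ satisfies $[|\widetilde N|^{p}]_{11} = \tfrac{1}{2}(|A|^{p}+|B|^{p})$. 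The theorem then reduces to a comparison between the $(1,1)$-compressions of $|N|^{p}$ and $2^{p-1}|\widetilde N|^{p}$.

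The plan is as follows. Apply the scalar Clarkson inequality (\ref{c2}) at the level of traces to establish weak majorization between the eigenvalue sequences of $|N|^{p}$ and $2^{p-1}|\widetilde N|^{p}$; by a Ky Fan/Horn-type theorem this produces a unitary $W$ on $\mathbb{C}^{2n}$ with $|N|^{p} \le 2^{p-1}\,W\,|\widetilde N|^{p}\,W^{*}$. Extract the $(1,1)$-block of both sides and read off the required $n \times n$ unitaries $U, V$ from the blocks of $W$. For the reverse inequality in the range $0 < p \le 2$, replay the argument with operator concavity of $t \mapsto t^{p/2}$ on $[0, 2]$ in place of operator convexity on $[2, 4]$, flipping every inequality sign.

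The main obstacle is the unitary-extraction step: Ky Fan's theorem yields a unitary $W$ on the $2n$-dimensional ambient space without guaranteeing that it respects the $2\times 2$ block structure, so the block compression of $W|\widetilde N|^{p}W^{*}$ need not split cleanly into two unitary conjugates of $|A|^{p}$ and $|B|^{p}$. To circumvent this, the likely path is to adapt the three-lines theorem and duality framework of Section~\ref{s3} to the present operator-inequality setting: define a scalar analytic function $F(z) = \mathrm{tr}\bigl(Y_{1}|A+B|^{pz} + Y_{2}|A-B|^{pz}\bigr)$ with carefully chosen dual operators $Y_{1}, Y_{2}$ from the $q$-Schatten class, apply Lemma \ref{le1}, anchor at the sharp equality $|A+B|^{2}+|A-B|^{2} = 2(|A|^{2}+|B|^{2})$ at $p=2$ (where $U=V=I$ suffices), and convert the resulting trace inequality into an operator inequality via a duality argument analogous to Theorem \ref{AKd}.
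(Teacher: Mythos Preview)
The paper does not prove Theorem~\ref{BLm}: it is merely quoted in Section~\ref{s4} as a result of Bourin and Lee \cite{BL20}, with no argument given. So there is no ``paper's own proof'' to compare your attempt against.

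As for your proposal itself, the first strategy you sketch (the $2\times 2$ block dilation with the Hadamard unitary) is in the right spirit for this circle of results, and you correctly identify its weak point: weak majorization of eigenvalues only produces a global unitary $W$ on $\mathbb{C}^{2m}$, and compressing $W|\widetilde N|^{p}W^{*}$ to the $(1,1)$ block does not in general yield a sum of two \emph{unitary} conjugates of $|A|^{p}$ and $|B|^{p}$---the off-diagonal blocks of $W$ obstruct this. You do not explain how to overcome this, and the gap is real.

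Your fallback plan---adapting Lemma~\ref{le1} and Theorem~\ref{AKd}---cannot work for the reason that those tools produce \emph{scalar} trace or norm inequalities, not operator inequalities. The three-lines theorem bounds $|F(1/p)|$ for a trace functional $F$, and duality converts an $\ell_p$-type norm inequality into its $\ell_q$ counterpart; neither mechanism manufactures the unitaries $U,V$ that the statement requires. Anchoring at $p=2$ gives the Hermitian identity $|A+B|^{2}+|A-B|^{2}=2(|A|^{2}+|B|^{2})$, but interpolating a trace functional away from $p=2$ only recovers the scalar Clarkson inequality~(\ref{c2}), not the operator refinement. The actual Bourin--Lee argument uses a different device (their unitary/isometry orbit techniques) that you would need to consult \cite{BL20} for.
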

	
	In view of \eqref{c2n}, we conjecture a $n$-variable version of Theorem \ref{BLm}.
	\begin{conj}
		Let $A_1,\ldots,A_n$ be $m\times m$ matrices and $p>2$. Then there exists $\tfrac{n(n-1)}{2}+1$ unitary matrices $U,U_{i,j}\ (1\le i<j\le n)$ such that
		\[
		U\left|\sum\limits_{i=1}^n A_i \right|^pU^*+\sum_{1\le i<j\le n}U_{i,j}\left|A_i-A_j\right|^pU_{i,j}^*\le n^{p-1}\sum_{i=1}^n\left|A_i\right|^p.
		\]
		For $0<p\le 2$, the inequality is reversed.
	\end{conj}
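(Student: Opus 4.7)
The plan is to generalize the two-variable argument of Bourin and Lee~\cite{BL20}. The backbone of the proof is the $n$-variable analogue of the parallelogram identity
\[
\Bigl|\sum_{i=1}^n A_i\Bigr|^2 + \sum_{1 \le i < j \le n} |A_i - A_j|^2 = n \sum_{i=1}^n |A_i|^2,
\]
which is verified by direct expansion, or equivalently by the observation that the $(\binom{n}{2}+1)\times n$ matrix $T$ with first row $(1,\ldots,1)$ and remaining rows $e_i-e_j$ ($1\le i<j\le n$) satisfies $T^{*}T = n I_n$; hence $\tfrac{1}{\sqrt{n}}T$ realizes an isometric embedding of $\mathbb{C}^n$ into $\mathbb{C}^{\binom{n}{2}+1}$. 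Writing $B_0:=\sum_i A_i$ and $B_{ij}:=A_i-A_j$, the identity becomes $|B_0|^2+\sum_{i<j}|B_{ij}|^2 = n\sum_i|A_i|^2$, so we have a \emph{tight-frame} family of positive semi-definite matrices on the left whose sum is controlled exactly by $\sum_i |A_i|^2$ on the right.

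The strategy is to pass from this quadratic identity to the claimed $p$-th power inequality in two operator-level steps. For $p\ge 2$ and $r=p/2\ge 1$, the Ando--Zhan weak majorization $|B_0|^p+\sum_{i<j}|B_{ij}|^p \preceq_w (|B_0|^2+\sum_{i<j}|B_{ij}|^2)^{p/2}$ should lift, in the spirit of Bourin's unitary-orbit techniques, to the existence of unitaries $U, U_{ij}$ with
\[
U|B_0|^p U^{*} + \sum_{1\le i<j\le n} U_{ij}|B_{ij}|^p U_{ij}^{*} \le \Bigl(n\sum_i|A_i|^2\Bigr)^{p/2} = n^{p/2}\Bigl(\sum_i|A_i|^2\Bigr)^{p/2}.
\]
The second step is then the operator Jensen-type bound $\bigl(\sum_i|A_i|^2\bigr)^{p/2} \le n^{p/2-1}\sum_i|A_i|^p$; chaining the two estimates produces exactly the factor $n^{p-1}$ required on the right-hand side of the conjecture.

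The principal obstacle lies in this Jensen step. Because $t\mapsto t^{p/2}$ is operator convex precisely on $p\in[2,4]$, this bound is a genuine operator inequality in that range, and the conjecture follows for $2\le p\le 4$ by direct chaining, with no ancillary unitaries appearing on the right. For $p>4$ operator convexity fails, and the Jensen step must be replaced by a unitary-orbit surrogate; the resulting extra unitaries on the right-hand side would then have to be absorbed into a re-choice of $U,U_{ij}$ on the left, paralleling the manoeuvre Bourin and Lee carry out for $n=2$. Engineering this absorption step, so that no unitaries remain on the $\sum_i |A_i|^p$ side, is where the bulk of the technical work is expected to reside.

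For the reversed inequality in the range $0<p\le 2$, the analogous scheme should deliver the claim: the concave version of Bourin's subadditivity (with $r=p/2\in(0,1]$) reverses the first step, and the operator concavity of $t^{p/2}$ on $[0,1]$ reverses the Jensen step, neither of which introduces ancillary unitaries on the right.
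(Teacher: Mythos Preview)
The paper does not prove this statement: it is posed in Section~4 as an open conjecture, with no argument supplied. So there is no ``paper's proof'' to compare against, and your proposal should be assessed on its own merits as a potential attack on an open problem.

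Your two-step chain is sound on the restricted range $0<p\le 4$. For $0<p\le 2$, Bourin--Uchiyama concave subadditivity gives unitaries with
\[
\Bigl(n\sum_i|A_i|^2\Bigr)^{p/2}\le U|B_0|^pU^*+\sum_{i<j}U_{ij}|B_{ij}|^pU_{ij}^*,
\]
and operator concavity of $t^{p/2}$ yields $(\sum_i|A_i|^2)^{p/2}\ge n^{p/2-1}\sum_i|A_i|^p$; chaining gives the reversed inequality with constant $n^{p-1}$ and no stray unitaries on the right. For $2\le p\le 4$, the convex Bourin superadditivity (which does extend to $N$ summands by induction, since unitary conjugation preserves the order) handles step one, and operator convexity of $t^{p/2}$ on $[1,2]$ handles step two. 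So in this range you have a genuine proof, which is already more than the paper offers.

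The gap is exactly where you locate it, but it is more serious than your phrasing suggests. For $p>4$ the Jensen step fails as an operator inequality, and its natural unitary-orbit surrogate produces
\[
\Bigl(\sum_i|A_i|^2\Bigr)^{p/2}\le n^{p/2-1}\sum_i W_i|A_i|^p W_i^*
\]
with unitaries $W_i$ on the \emph{right}. There is no mechanism for ``absorbing'' these into a re-choice of $U,U_{ij}$ on the left: the left-hand unitaries act on the $B$-blocks, the right-hand ones on the $A$-blocks, and the two sides are linked only through an inequality, not an equation. Note too that this same obstruction already arises for $n=2$ in your scheme, yet Bourin--Lee's Theorem~4.1 holds for all $p>2$; so their argument cannot be the two-step chain you describe, and the ``manoeuvre'' you allude to is not an absorption of unitaries across a Jensen step. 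Identifying what Bourin--Lee actually do to bypass the operator-convexity barrier at $n=2$, and whether that device survives the passage from the $2\times 2$ Hadamard isometry to the $(\binom{n}{2}+1)\times n$ frame $T$, is the real content of the conjecture.
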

	There are still many open problems on the famous Clarkson-McCarthy's inequalities, see \cite{PX03}.
	
	\section*{Declaration of competing interest}
	The author declares no competing interests.
	
	\section*{Data availability}
	No data was used for the research described in the article.
	
	\section*{Acknowledgments}
	Teng Zhang is supported by the China Scholarship Council, the Young Elite Scientists Sponsorship Program for PhD Students (China Association for Science and Technology), and the Fundamental Research Funds for the Central Universities at Xi'an Jiaotong University (Grant No.~xzy022024045). 
	The author is deeply grateful to Professor Eric A. Carlen for reviewing an earlier version of this manuscript and offering valuable suggestions. Thanks are also extended to Professor M. Lin and the referee for their insightful comments.

\end{document}